%
%
%


\documentclass{mcom-l}

\newcommand{\R}{\mathbb{R}}
 
\newcommand{\Z}{\mathbb{Z}}
\newcommand{\N}{\mathbb{N}}

\newcommand{\norm}[1]{\left\lVert#1\right\rVert}

\usepackage{amssymb}
\usepackage{graphicx}
\usepackage{xcolor}
\usepackage{booktabs}
\usepackage{tabularx}
\usepackage{adjustbox}
\usepackage{hyperref}
\usepackage{cleveref}
\usepackage{algorithm}
\usepackage{algpseudocode}
\usepackage{float}
\usepackage{caption}
\usepackage{subcaption}
\usepackage{enumerate}
\usepackage{enumitem}   
\usepackage{alltt}
\algtext*{EndWhile}
\algtext*{EndFor}
\algtext*{EndIf}

\setcounter{MaxMatrixCols}{12}



\copyrightinfo{}{}

\newtheorem{theorem}{Theorem}[section]
\newtheorem{lemma}[theorem]{Lemma}
\newtheorem{corollary}[theorem]{Corollary}

\theoremstyle{definition}
\newtheorem{definition}[theorem]{Definition}
\newtheorem{example}[theorem]{Example}

\newtheorem{question}[theorem]{Open Question}

\theoremstyle{remark}
\newtheorem{remark}[theorem]{Remark}

\numberwithin{equation}{section}

\begin{document}

\title[Deterministic hardness of SVP]{Fine-grained deterministic hardness of\\
	the shortest vector problem}

\author[M. Hittmeir]{Markus Hittmeir}
\address{NORCE Research}
\curraddr{Nygårdsgaten 112, 5008 Bergen, Norway}
\email{mahi@norceresearch.no}
\thanks{The author was supported by the Research Council of Norway (grant 357539).}

\subjclass[2010]{11H06, 11Y16}

\date{}

\dedicatory{}

\begin{abstract}
	Let $\gamma$-$\mathsf{GapSVP}_p$ be the decision version of the shortest vector problem in the $\ell_p$-norm with approximation factor $\gamma$, let $n$ be the lattice rank and $0<\varepsilon\leq 1$. We prove that there is no algorithm that solves $(2-\varepsilon)$-$\mathsf{GapSVP}_p$ uniformly for all $p\in\N$ in time 
	\[
	2^{2^{o(p)}}\cdot 2^{o(n)},
	\] 
	unless the Exponential Time Hypothesis is false. The proof is based on a deterministic Karp reduction from a constrained variant of the subset-sum problem to $\mathsf{GapSVP}_p$ for fixed $p$. While most hardness results for the shortest vector problem in finite norms rely on randomized reductions, our method is entirely deterministic. As a consequence, we also obtain a deterministic Karp reduction from the standard subset-sum problem to $(2-\varepsilon)$-$\mathsf{GapSVP}_{\infty}$.
\end{abstract}

\maketitle

\section{Introduction}
The Shortest Vector Problem (SVP) is a fundamental problem in computational mathematics, lattice theory and cryptography. The goal is to find the shortest non-zero vector (with respect to some $\ell_p$-norm) that can be written as an integer linear combination of certain vectors over $\R^m$ (the set of these linear combinations is commonly referred to as \emph{lattice}). SVP is the foundation of several public-key cryptosystems (\cite{Pei}), and known to be NP-hard under randomized reductions (\cite{Ajt}, \cite{Mic}). However, proving its hardness under deterministic reductions has been a notorious open problem (\cite{BenPei}). In this paper, we contribute to this problem by proving results on the \emph{fine-grained} deterministic hardness of SVP, which consist in conditional lower bounds for its runtime complexity.

Let us briefly recall the formal definition of lattices and some fundamental notions related to them.
\begin{definition}
	Let $n,m\in\N$. A \emph{lattice} $\mathcal{L}$ of rank $n$ and dimension $m$ is the set of all integer linear combinations of linearly independent vectors $\textbf{b}_1,\ldots,\textbf{b}_n\in\R^m$,
	\[
	\mathcal{L}=\mathcal{L}(\textbf{b}_1,\ldots,\textbf{b}_n):=\left\{\sum_{i=1}^n y_i \textbf{b}_i: y_1,\ldots, y_n\in\Z\right\}.
	\]
	The \emph{$\ell_p$-norm} of $\textbf{x}=(x_1,\ldots,x_m)\in \R^m$ is defined as
	$
	||\textbf{x}||_p:=\left(\sum_{i=1}^m|x_i|^p\right)^{1/p}
	$
	for $1\leq p <\infty$, and as $||\textbf{x}||_{\infty}:=\max_{1\leq i \leq m} |x_i|$ for $p=\infty$.
	
	We define $\lambda_{1,p}(\mathcal{L})$ to be the \emph{length of a shortest non-zero vector} in $\mathcal{L}$ in the $\ell_p$-norm, 
	\[
	\lambda_{1,p}(\mathcal{L}):=\min_{\textbf{v}\in\mathcal{L}\backslash \{0\}}||\textbf{v}||_p.
	\]
\end{definition}

\begin{definition}[$\gamma$-$\mathsf{GapSVP}$ in the $\ell_p$-norm]	
	\leavevmode\par\noindent
	Let $\gamma\geq 1$ and $p\geq 1$. We define the $\gamma$-approximative \emph{decision version of the shortest vector problem} in the $\ell_p$-norm, $\gamma$-$\mathsf{GapSVP}_p$, as follows: Given a lattice $\mathcal{L}$ of rank $n$ and a distance threshold $\delta>0$, decide whether
	\begin{itemize}
		\item{$\lambda_{1,p}(\mathcal{L})\leq \delta$ (YES-instance)},
		\item{$\lambda_{1,p}(\mathcal{L})> \gamma\delta$ (NO-instance)},	
	\end{itemize}
when one of these cases is promised to hold.
For $\gamma=1$, we denote the \emph{exact} decision version of the shortest vector problem by $\mathsf{GapSVP}_p$.
\end{definition}

Hardness results for $\gamma$-$\mathsf{GapSVP}_p$ have been studied extensively. The NP-hardness of $\mathsf{GapSVP}_\infty$ has first been established in 1981 by van Emde Boas (\cite{Van}). Dinur (\cite{Din}) proved in 2002 that $\gamma$-$\mathsf{GapSVP}_\infty$ is NP-hard for approximation factors $\gamma$ that are almost polynomial in $n$. Hardness results for finite $p$ started with Ajtai's work in 1998 (\cite{Ajt}), who proved that $\mathsf{GapSVP}_2$ is NP-hard under a randomized reduction. Subsequently, the hardness has been extended to constant approximation factors for all finite $p$ (\cite{Kho}, \cite{Mic}). While these results are still based on randomized reductions (assuming $\text{NP}\not\subseteq \text{RP}$), Micciancio (\cite{Mic}) also gave a deterministic polynomial-time reduction under the assumption of a number theoretic conjecture that concerns the distribution of square-free smooth numbers. Most hardness results for SVP are based on a reduction from the closest vector problem, using locally dense lattices. However, a recent preprint by Hair and Sahai (\cite{HaiSah}) states unconditional deterministic NP-hardness of the shortest vector problem for $p>2$ with the approximation factor $2^{\log^{1-\varepsilon}(n)}$. Their reduction bypasses the closest vector problem and uses probabilistically checkable proofs (PCP) instead, resolving the long-standing open problem of proving deterministic NP-hardness for any finite $p$. In addition, it improves upon the constant approximation factors of the randomized reductions discussed above. For a more detailed overview on results and open problems concerning the complexity and hardness of SVP, we refer the reader to Bennett's survey (\cite{Ben}).

Another interesting research area is the \emph{fine-grained} hardness of SVP. The main goal is to provide a lower bound for the runtime complexity of SVP based on an assumption such as the Strong Exponential Time Hypothesis ($\mathsf{SETH}$). This hypothesis states that for every $\varepsilon>0$ there is a $k\in\N$ such that the $k$-SAT problem on formulas with $n$ variables is not in $O(2^{(1-\varepsilon)n})$-time\footnote{All cost statements in this paper treat arithmetic and bitwise operations on $O(L)$-bit integers as unit-cost, where $L$ is the maximum bit-length of any integer appearing in the instance encoding.}. Similar to the NP-hardness results for finite $p$ discussed above, the current results providing such lower bounds for $\gamma$-$\mathsf{GapSVP}_p$ for finite $p$ are mostly based on the randomized version of this hypothesis. In 2018, it has been proved (\cite{Agg}) that for $p>p_0\approx 2.1397$, $p\not\in 2\Z$, $\mathsf{GapSVP}_p$ is not in $O(2^{C_p n})$-time for some constant $C_p\in(0,1)$, unless randomized $\mathsf{SETH}$ is false. By using a gap-variant of $\mathsf{SETH}$ that states a complexity lower bound for the approximation of the number of satisfiable clauses in $k$-SAT, this has also been extended to  $\gamma$-$\mathsf{GapSVP}_p$ for some constant $\gamma>1$ (\cite{Agg},\cite{BenPeiTan}). In addition, a recent preprint (\cite{HecSaf}) by Hecht and Safra contains two hardness results for $\mathsf{GapSVP}_p$ based on reductions that are subexponential-time, but deterministic. The first states that $(\sqrt{2}-o(1))$-$\mathsf{GapSVP}_p$ is not in polynomial-time for every $p>2$, unless 3-SAT is in $O(2^{O(n^{2/3}\log n)})$-time. The second statement improves upon the approximation factor $\gamma$ for sufficiently large $p$, assuming $\text{NP}\not\subseteq \text{SUBEXP}$.

To summarize, there is a large variety of strong hardness results for $\mathsf{GapSVP}_p$. But with few exceptions, the results rely on randomized reductions. In this paper, we will contribute to the problem of providing hardness results for GapSVP that do not rely on randomization. In particular, we are interested in a \emph{parameterized} version of the shortest vector problem. Such versions have previously been studied with the distance threshold $\delta$ as the parameter (\cite{BenCheGur}, \cite{BhaBonEgr}). We will instead consider SVP parameterized by the norm parameter $p$. To be precise, we investigate the following problem, where $p$ is a part of the input.

\begin{definition}[$\gamma$-$\mathsf{GapSVP}_p$ parameterized by $p$]
	\leavevmode\par\noindent
	Let $\gamma\geq 1$. We define the $\gamma$-approximative \emph{parameterized decision version of the shortest vector problem} in the $\ell_p$-norm, $\gamma$-$\mathsf{GapSVP}_{\mathrm{par}}$, as follows:
	Given a lattice $\mathcal{L}$ of rank $n$, a distance threshold $\delta>0$, and the parameter $p\in\mathbb{N}$, decide whether
	\begin{itemize}
		\item $\lambda_{1,p}(\mathcal{L})\leq \delta$ \quad (YES-instance),
		\item $\lambda_{1,p}(\mathcal{L})> \gamma\delta$ \quad (NO-instance),
	\end{itemize}
	when one of these cases is promised to hold.
\end{definition}

We start by establishing a \emph{deterministic polynomial-time Karp reduction} from a variant of the subset-sum problem to standard $\mathsf{GapSVP}_p$ for all $p\geq 1$. The reduction preserves the problem dimension $n$ up to constant factors, which allows to make use of a fine-grained hardness result for subset-sum based on the Exponential Time Hypothesis ($\mathsf{ETH}$). This hypothesis is weaker than $\mathsf{SETH}$ and states that 3-SAT cannot be solved in time $2^{o(n)}$. Here is our main result.

\begin{theorem}\label{thm:main}
		Let $0<\varepsilon\leq 1$. Assuming $\mathsf{ETH}$, there is no algorithm that solves $(2-\varepsilon)$-$\mathsf{GapSVP}_{\mathrm{par}}$ in time $2^{2^{o(p)}}\cdot 2^{o(n)}$.
\end{theorem}

In addition, our techniques yield a deterministic Karp reduction from the standard subset-sum problem to the constant-factor approximate shortest vector problem in the $\ell_\infty$-norm, with only linear blow-up in the lattice rank.  A similar direct reduction to the exact problem $\mathsf{SVP}_\infty$ was claimed in \cite{MicGol}. However, it has been demonstrated in \cite{KreNip} that the argument does not hold. In addition to van Emde Boas' well-known reduction from bounded homogenous linear equations, the following result thus provides a novel route to studying the hardness of $\gamma$-$\mathsf{GapSVP}_{\infty}$.

\begin{theorem}\label{thm:main2}
	Let $0<\varepsilon\leq 1$. There is a deterministic polynomial-time reduction from the subset-sum problem to $(2-\varepsilon)$-$\mathsf{GapSVP}_{\infty}$ that reduces instances with $n$ weights to instances with a lattice of rank $O_{\varepsilon}(n)$.
\end{theorem}

As mentioned, our proofs are based on a Karp reduction from a variant of the subset-sum problem to GapSVP. The standard version of subset-sum considers $a_1x_1+\cdots +a_nx_n=s$, where the $a_i$ (the ``weights'') and $s$ (the ``target sum'') are given non-negative integers. The problem is to find $(x_1,\ldots,x_n)\in\{0,1\}^{n}$ such that the equation is satisfied. Our variant adds restrictions for integer vectors that are orthogonal to both the vector of weights $(a_1,\ldots,a_n)\in\R^n$ and to the vector $\textbf{1}_n=(1,1,\ldots,1)\in \R^n$. Namely, if no such non-zero vector that is short in the $\ell_p$-norm exists, then the problem instance is reducible to an instance of GapSVP. We subsequently show that any standard subset-sum problem instance satisfies these restrictions with respect to some $\ell_p$-norm for a sufficiently small value of $p$, allowing us to prove the bound stated in the \Cref{thm:main}. In addition, if one were to prove NP-hardness of said variant of subset-sum, this would immediately imply NP-hardness of $\mathsf{GapSVP}_p$ for fixed $p\geq 1$ (in particular for $p=2$).

The remainder of the paper is structured as follows: In \Cref{sec:rel_work}, we will state lemmata and discuss related work, in particular the technique for solving subset-sum problems of low density. In \Cref{sec:improv}, we will explain our method and prove the main results. \Cref{sec:discuss} provides remarks and directions for future research.

\section{Preliminaries} \label{sec:rel_work}
Our main strategy is to exploit and elaborate on a relationship between the shortest vector and the subset-sum problem. We will discuss the core principle of this relationship later in this section. Let us start with some standard and/or easily obtainable results. The following lemma is well-known and can be derived as a special case of Hölder's inequality.
\begin{lemma}\label{lem:holder}
	Let $1\leq q\leq p$, $m\in\N$ and $\textbf{x}\in\R^m$. Then
$
	||\textbf{x}||_q\leq m^{1/q-1/p}||\textbf{x}||_p.
$
Moreover, for $1\leq q < p=\infty$, we have $||\textbf{x}||_{\infty}\leq||\textbf{x}||_q\leq m^{1/q}||\textbf{x}||_{\infty}$.
\end{lemma}

\begin{lemma}\label{lem:infinity}
	Let $1\leq \gamma < \gamma'$, let $\mathcal{L}$ be a lattice of dimension $m$, let $\delta>0$ and $q\in\N$ such that
	\[
	q\geq \log_2 m / \log_2\left(\gamma'/\gamma\right).
	\]	
	$\gamma\text{-}\mathsf{GapSVP}_{\infty}(\mathcal{L},\delta)$ is a YES-instance if $\gamma'\text{-}\mathsf{GapSVP}_{\mathrm{par}}(\mathcal{L},\delta,q)$ is a YES-instance.
	Similarly, $\gamma\text{-}\mathsf{GapSVP}_{\infty}(\mathcal{L},\delta)$ is a NO-instance if $\gamma'\text{-}\mathsf{GapSVP}_{\mathrm{par}}(\mathcal{L},\delta,q)$ is a NO-instance. 	
\end{lemma}
\begin{proof}
	Since the second inequality in \Cref{lem:holder} holds pointwise for all $\textbf{x}\in\mathcal{L}$, it also holds for the minima over all $\textbf{x}\in\mathcal{L}\backslash \{0\}$, hence
	\[
	\lambda_{1,\infty}(\mathcal{L})\leq \lambda_{1,q}(\mathcal{L})\leq m^{1/q}\lambda_{1,\infty}(\mathcal{L}).
	\]
	Now if $\gamma'$-$\mathsf{GapSVP}_{\mathrm{par}}(\mathcal{L},\delta,q)$ is a YES-instance, then 
	$
	\lambda_{1,\infty}(\mathcal{L}) \leq \lambda_{1,q}(\mathcal{L})\leq \delta,
	$
	hence $\gamma$-$\mathsf{GapSVP}_{\infty}(\mathcal{L},\delta)$ is also a YES-instance. If $\gamma'$-$\mathsf{GapSVP}_{\mathrm{par}}(\mathcal{L},\delta,q)$ is a NO-instance, then 
	$
	m^{1/q}\lambda_{1,\infty}(\mathcal{L})\geq\lambda_{1,q}(\mathcal{L})>\gamma'\delta,
	$
	hence 
	\[
	\lambda_{1,\infty}(\mathcal{L})>\gamma'\delta/ m^{1/q}\geq \gamma'\delta\cdot \left(\gamma/\gamma'\right)=\gamma\delta.
	\]
	Therefore, $\gamma$-$\mathsf{GapSVP}_{\infty}(\mathcal{L},\delta)$ is also a NO-instance.
\end{proof}

As mentioned in the introduction, we will consider a variant of the subset-sum problem that imposes restrictions on the weights. In this context, we now introduce a notion that we will call $p$-dependency. 
\begin{definition}\label{def:rel}
	Let $0<\alpha<1$ and $p\geq 1$. We call non-negative integers $a_1,\ldots,a_n$ \emph{$p$-dependent with respect to $\alpha$} if there exists $\textbf{x}=(x_1,\ldots,x_n)\in\Z^n$ with $\textbf{x}\neq \textbf{0}$ such that
	\[
	\sum_{i=1}^n a_i x_i=0 \,\wedge\, \sum_{i=1}^n x_i=0 \,\wedge\, ||\textbf{x}||_p\leq \alpha\cdot n^{1/p}.
	\]
	If the value of $\alpha$ is clear from the context, we simply call $a_1,\ldots,a_n$ \emph{$p$-dependent}.
\end{definition}

\begin{example}
	Let $n=25$, $a_1=1$, $a_2=5$, $a_3=9$ and $a_4,\ldots,a_n\in\N_0$. For $\textbf{x}=(-1,2,-1,0,\ldots,0)\in\Z^n$, we have $||\textbf{x}||_2=\sqrt{6}\leq 5/2$, and the other conditions hold too. Hence, $a_1,\ldots,a_n$ are $2$-dependent with respect to $\alpha=1/2$.
\end{example}

\begin{example}\label{exa:rel}
	Let $p\geq 1$, $n\in\N$ and $N:=\lfloor \alpha\cdot n^{1/p}\rfloor+1$. Then one can prove that $N,N^2,\ldots,N^n$ are not $p$-dependent with respect to $\alpha$.
\end{example}

\begin{lemma}\label{lem:rel}
	Let $0<\alpha<1$, $p\geq 1$, $n,m\in\N$ and $a_1,\ldots,a_n, b_1,\ldots,b_m$ be non-negative integers. The following hold:
	\begin{enumerate}
		\item{$a_1,\ldots,a_n$ are $p$-dependent iff $\lambda a_1,\ldots, \lambda a_n$ are $p$-dependent for every $\lambda\in\N$}.
		\item{If $a_1,\ldots,a_n$ are $p$-dependent, then $a_1,\ldots,a_n,b_1,\ldots,b_m$ are $p$-dependent.}
		\item{If $a_1,\ldots,a_n$ are $p$-dependent, they are also $q$-dependent for every $q\in[1,p]$.}
		\item{If $n<2\alpha^{-p}$, then $a_1,\ldots,a_n$ are not $p$-dependent.}
	\end{enumerate}
\end{lemma}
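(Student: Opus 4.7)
The plan is to verify each of the four items by taking an appropriate version of the witness vector $\textbf{x}$ from \Cref{def:rel} and checking the three conditions defining $p$-dependency.

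Part (1) is immediate from the fact that for $\lambda\in\N$ we have $\sum_i a_i x_i=0 \iff \sum_i \lambda a_i x_i = 0$ (since $\lambda>0$), while the remaining conditions $\sum_i x_i = 0$ and $||\textbf{x}||_p \le \alpha n^{1/p}$ do not involve the weights at all; the same vector $\textbf{x}$ certifies $p$-dependency for both weight sequences. Part (2) follows by padding: given a witness $\textbf{x}\in\Z^n$ for $a_1,\ldots,a_n$, the extension $\textbf{x}':=(\textbf{x},0,\ldots,0)\in\Z^{n+m}$ satisfies both orthogonality conditions for the enlarged sequence, and $||\textbf{x}'||_p = ||\textbf{x}||_p \le \alpha n^{1/p} \le \alpha (n+m)^{1/p}$. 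For part (3), I would apply Hölder's inequality (\Cref{lem:holder}) to the same witness: for $1\le q \le p$,
\[
||\textbf{x}||_q \;\le\; n^{1/q-1/p}\,||\textbf{x}||_p \;\le\; n^{1/q-1/p}\cdot \alpha n^{1/p} \;=\; \alpha n^{1/q},
\]
so $\textbf{x}$ itself certifies $q$-dependency.

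The only part with any combinatorial content is (4). The key observation is that the condition $\sum_i x_i = 0$ forces any nonzero integer witness $\textbf{x}$ to have at least one strictly positive and at least one strictly negative coordinate, and hence at least two entries of absolute value $\ge 1$. This immediately yields $||\textbf{x}||_p^p \ge 2$, i.e., $||\textbf{x}||_p \ge 2^{1/p}$. The hypothesis $n < 2\alpha^{-p}$ is equivalent to $\alpha n^{1/p} < 2^{1/p}$, so no candidate witness can satisfy the norm constraint $||\textbf{x}||_p \le \alpha n^{1/p}$, and $p$-dependency must fail.

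No step poses a genuine obstacle; the lemma is essentially bookkeeping that records how $p$-dependency behaves under scalar multiplication, enlargement of the weight sequence, change of norm, and smallness of $n$. The one subtlety, appearing in part (4), is that the $\textbf{1}_n$-orthogonality assumption is essential — without it a single-entry witness of the form $\textbf{x}=\pm\textbf{e}_i$ would yield $||\textbf{x}||_p=1$, and the lower bound $2^{1/p}$ (together with the rest of the argument) would break down for every $n\ge 1$.
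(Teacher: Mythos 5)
Your proof is correct and follows essentially the same route as the paper's: parts (1)--(3) are the same direct observations/padding/H\"older argument, and part (4) uses the same observation that any witness has at least two non-zero entries, giving $\|\textbf{x}\|_p^p\geq 2$ and hence $n\geq 2\alpha^{-p}$. You are slightly more explicit than the paper in pinning the ``two non-zero entries'' step to the constraint $\sum_i x_i=0$, which is a welcome clarification but not a different argument.
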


\begin{proof}
	One easily observes $(1)$. For $(2)$, let $\textbf{x}=(x_1,\ldots,x_n)$ be a vector that satisfies the conditions of \Cref{def:rel} for $a_1,\ldots,a_n$. Then $\textbf{x}'=(x_1,\ldots,x_n,0,\ldots,0)$ in $\Z^{n+m}$ satisfies all conditions of \Cref{def:rel} for $a_1,\ldots a_n,b_1,\ldots, b_m$, in particular $||\textbf{x}'||_p=||\textbf{x}||_p\leq \alpha\cdot n^{1/p}\leq \alpha\cdot (n+m)^{1/p}$.
	
	For proving $(3)$, assume that there exists $\textbf{x}=(x_1,\ldots,x_n)\in\Z^n$ satisfying the condition of \Cref{def:rel}.
	Considering the third condition, \Cref{lem:holder} implies
	$
	||\textbf{x}||_{q}\leq n^{1/q-1/p}||\textbf{x}||_p\leq \alpha\cdot n^{1/q},
	$
	which proves the claim.
	
	Let us now prove $(4)$. Assume that $a_1,\ldots,a_n$ are $p$-dependent. We first note that $\textbf{x}=(x_1,\ldots,x_n)$ must have at least two non-zero entries. The smallest possible combination (leading to the smallest $\ell_p$-norm) would be $1$ and $-1$, which implies $n\geq \alpha^{-p}||\textbf{x}||_p^p=2\alpha^{-p}$. Clearly, for all other combinations of two or more non-zero entries, $n$ must be even larger. We have thus established that, if $a_1,\ldots,a_n$ are $p$-dependent, then $n\geq 2\alpha^{-p}$. The statement $(4)$ follows.
\end{proof}

\begin{remark} While we will use the notion of $p$-dependency exactly as it is stated above, it may be interesting to note the following:
	\begin{itemize}
		\item[(i)]{	Define $\textbf{a}=(a_1,\ldots,a_n)\in \Z^n$ and $\textbf{1}_n=(1,1,\ldots,1)\in\Z^n$, then the vector $\textbf{x}$ in \Cref{def:rel} is orthogonal to both $\textbf{a}$ and $\textbf{1}_n$, i.e.,
			\[
			\textbf{a} \cdot \textbf{x} = 0 \,\text{ and }\, \textbf{1}_n \cdot \textbf{x} = 0.
			\]
		}
		\item[(ii)]{Imposing certain restrictions on $a_1,\ldots,a_n$ allows to increase the bound in \Cref{lem:rel} (4). For example, if the $a_1,\ldots,a_n$ are distinct, one can prove the same statement for $n<4\alpha^{-p}$.}
	\end{itemize}
\end{remark}

Let us move on to explaining an approach for solving subset-sum problems via an oracle for the shortest vector problem. It is well known that the LLL-algorithm (\cite{LenLenLov}) for lattice reduction can be used to solve a certain class of knapsack and subset-sum problems in polynomial-time (\cite{LagOdl}, \cite{SchEuc}). In particular, consider a subset-sum instance with weights $a_i$ and target sum $s$, given by the equation
\begin{align}\label{eq:ssum}
	a_1x_1+\cdots +a_nx_n=s.
\end{align}
The problem is to find $(x_1,\ldots,x_n)\in\{0,1\}^{n}$ such that $(\ref{eq:ssum})$ is satisfied. In the context of lattice reduction techniques, the efficient solvability of this problem depends on the density 
$
d:=n/\log_2(\max_i a_i).
$
The best of these results has been proved by Coster et al. (\cite{CosJouMac}), where it is shown that an oracle for finding the shortest vector in a special lattice can be used to solve almost all subset-sum problems with $d<0.9408$. In practice, this oracle is replaced by an application of the LLL-algorithm or other lattice reduction techniques. The procedure works as follows. For $N:=\lfloor \frac{1}{2}\sqrt{n}\rfloor+1$, we define the lattice $\mathcal{L}\subseteq \Z^{n+1}$ spanned by
\begin{align*}
	\textbf{b}_1 &= (1,0,\ldots,0,0,Na_1),\\
	\textbf{b}_2 &= (0,1,\ldots,0,0,Na_2),\\
	&\vdots\\
	\textbf{b}_{n}&=(0,0,\ldots,0,1,Na_n),\\
	\textbf{b}_{n+1}&=\left(\frac{1}{2},\frac{1}{2}\ldots,\frac{1}{2},\frac{1}{2},Ns\right).
\end{align*}
If $(x_1,\ldots,x_n)$ is a solution to (\ref{eq:ssum}), we have
\[
\textbf{v}=\sum_{i=1}^n x_i\textbf{b}_i-\textbf{b}_{n+1}=(y_1,\ldots,y_n,0)\in\mathcal{L},
\]
where $y_i\in\{-\frac{1}{2},\frac{1}{2}\}$ and, hence, $\norm{\textbf{v}}_2\leq \frac{1}{2}\sqrt{n}$. Knowing $\textbf{v}$, it is easy to retrieve a solution to the subset-sum problem. Therefore, the hope of applying the LLL-algorithm is that $v$ will occur in the reduced basis of $\mathcal{L}$.

In our approach (\Cref{lem:lattice}), we will consider a variation of the lattice defined above that also takes the number of non-zero elements in $(x_1,\ldots, x_n)$ into account. Using this restriction together with the assumption of $p$-independency of the weights, we can rigorously prove that $\textbf{v}$ is the shortest vector in the lattice, regardless of the density of the subset-sum problem. 

\section{Main Results}\label{sec:improv}
In this section, we will prove \Cref{thm:main} and \Cref{thm:main2}. We start by formally defining the decision version of the subset-sum problem, and consider some hardness results from the literature.
\begin{definition}\label{def:ssum}
	We define the $\emph{subset-sum problem}$, $\mathsf{SS}$, as follows:
	Given $s\in\N$ and non-negative integers $a_1,\ldots,a_n$, decide whether
	\begin{itemize}
		\item{$\exists (x_1,\ldots,x_n)\in\{0,1\}^{n}: \sum_{i=1}^n a_ix_i=s$ (YES-instance)}
		\item{$\nexists (x_1,\ldots,x_n)\in\{0,1\}^{n}: \sum_{i=1}^n a_ix_i=s$ (NO-instance)}		
	\end{itemize}
	For $\mathsf{SS}(a_1,\ldots,a_n,s)$, we will call witnesses $(x_1,\ldots,x_n)$ for YES-instances \emph{solutions} of $\mathsf{SS}(a_1,\ldots,a_n,s)$.
\end{definition}

\begin{lemma}\label{lem:ssnp}
	$\mathsf{SS}$ is \emph{NP}-hard.
\end{lemma}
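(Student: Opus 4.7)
My plan is to prove NP-hardness via the classical Karp reduction from 3-SAT to subset-sum. Given a 3-CNF formula $\varphi$ with variables $x_1,\ldots,x_n$ and clauses $C_1,\ldots,C_m$, I would build a list of non-negative integers together with a target sum $s$ such that $\varphi$ is satisfiable if and only if the constructed instance of $\mathsf{SS}$ is a YES-instance.

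The construction encodes the weights as base-$B$ integers of length $n+m$, reserving one ``variable digit'' for each variable and one ``clause digit'' for each clause. For every variable $x_i$ I would introduce two weights $t_i, f_i$, each with a $1$ in the $i$-th variable digit; in addition, $t_i$ carries a $1$ in the clause digit of every clause in which $x_i$ appears positively, and $f_i$ carries a $1$ in the clause digit of every clause where $x_i$ appears negatively. For every clause $C_j$ I add two slack weights with digits $1$ and $2$ in the $j$-th clause position. The target $s$ has a $1$ in every variable digit and a $3$ in every clause digit. A satisfying assignment of $\varphi$ then naturally selects a valid subset, and conversely any subset summing to $s$ must pick exactly one of $\{t_i, f_i\}$ per variable (because the variable digit of $s$ is $1$) and at least one true literal per clause (because the two slacks contribute at most $3$ to a clause digit).

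The main obstacle to watch out for is that digit-wise carries must not destroy this combinatorial correspondence. I would circumvent this by choosing the base $B$ large enough that no column sum across any selected subset exceeds $B-1$; since the maximum total contribution to a single digit is at most $5$ (three literal weights plus two slack weights), any $B \geq 6$ suffices. The weights produced then have bit-length $O((n+m)\log B) = O(n+m)$, so the reduction runs in deterministic polynomial time, and NP-hardness of 3-SAT transfers to $\mathsf{SS}$.
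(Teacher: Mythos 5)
Your approach (a direct reduction from 3-SAT) is a genuinely different route from the paper, which simply cites Karp's 1972 list (where subset-sum appears under ``Knapsack'') and observes that allowing zero weights preserves hardness. Both routes are standard, but your construction has a concrete gap in the clause columns. With two slack weights contributing $1$ and $2$ to the $j$-th clause digit and a target of $3$ there, choosing \emph{both} slacks already yields $3$, matching the target with \emph{zero} contribution from true literals. Hence a subset summing to $s$ need not satisfy clause $C_j$: the inference ``the two slacks contribute at most $3$, hence at least one literal must be true'' is invalid, since ``at most $3$'' includes ``exactly $3$.'' The forward direction (satisfying assignment $\Rightarrow$ valid subset) is fine, but the converse, which is the whole point, fails.

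The fix is standard. Either use two slack weights each contributing $1$ to the clause digit (so slacks alone give at most $2 < 3$, forcing at least one true literal, while $1$, $2$, or $3$ true literals can be padded to $3$ by selecting $2$, $1$, or $0$ slacks respectively), or keep slacks $1$ and $2$ but raise the clause-digit target to $4$ (so that $0$ true literals give at most $3 < 4$, while $1$, $2$, or $3$ true literals reach $4$ by choosing both slacks, only the $2$-slack, or only the $1$-slack). With either repair your carry analysis goes through (base $B \geq 6$ in the first case, $B \geq 7$ in the second), and the reduction is a correct deterministic polynomial-time reduction establishing NP-hardness of $\mathsf{SS}$.
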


\begin{proof}	
Subset-sum is the eighteenth problem in Karp's list of NP-complete problems from 1972 (see \cite[p. 95]{Kar}, under ``Knapsack''). The proof is by reduction from the partition problem. The only difference is that Karp's original definition assumes all weights to be positive, whereas we also allow the weights to be $0$. However, it is easy to see that this generalization preserves NP-hardness.
\end{proof}

As mentioned in the introduction, there is a sub-exponential lower bound for the runtime of subset-sum based on the Exponential Time Hypothesis. The following statement is well known (e.g., see \cite{JanLanLan}).
\begin{lemma}\label{lem:eth}
	Assuming $\mathsf{ETH}$, $\mathsf{SS}$ cannot be solved in time $2^{o(n)}$.
\end{lemma}

In addition, a recent improvement also established SETH-hardness of subset-sum in terms of a lower bound depending on the weight/target size. 
\begin{lemma}\label{lem:seth}
	Assuming $\mathsf{SETH}$, for any $\varepsilon>0$ there exists $\delta>0$ such that  $\mathsf{SS}$ is not in time $O(s^{1-\varepsilon}2^{\delta n})$.
\end{lemma}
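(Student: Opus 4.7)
The plan is to derive this lemma from the SETH-hardness of $k$-SAT via a deterministic Karp reduction from $k$-SAT to $\mathsf{SS}$, followed by parameter chasing. The argument has three main steps: sparsify the $k$-SAT instance, reduce the sparsified instance to $\mathsf{SS}$, and substitute the parameters into the hypothetical $\mathsf{SS}$-algorithm.

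The first two steps are standard. Using the Impagliazzo--Paturi--Zane Sparsification Lemma, for any $\mu > 0$ a $k$-SAT instance on $N$ variables reduces deterministically to $O(2^{\mu N})$ sub-instances each having $O_k(N)$ clauses; this lets me assume $m \leq c_k N$ at the cost of a $2^{\mu N}$ multiplicative overhead, for arbitrarily small $\mu$. Then I would apply a variant of Karp's 1972 reduction from $k$-SAT to $\mathsf{SS}$ (the same reduction underlying \Cref{lem:ssnp}): each variable contributes two weights (one per truth value) and each clause contributes a constant number of slack weights, giving $n = c_1 N$; the digits of the weights in a suitably chosen base encode assignment-consistency and clause-satisfaction constraints, yielding a target $s \leq 2^{c_2 N}$.

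For the third step, assume for contradiction that for the prescribed $\varepsilon > 0$ no valid $\delta$ exists, so $\mathsf{SS}$ is in time $O(s^{1-\varepsilon} 2^{\delta n})$ for every $\delta > 0$. Substituting $n = c_1 N$ and $s \leq 2^{c_2 N}$ and summing over the $O(2^{\mu N})$ sparsified sub-instances yields a $k$-SAT algorithm of total running time $O(2^{(c_2(1-\varepsilon) + c_1 \delta + \mu)N})$. Choosing $\delta$ and $\mu$ small enough drives this exponent strictly below $1 - \varepsilon^{*}$ for some $\varepsilon^{*} > 0$, contradicting SETH.

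The main obstacle is calibrating the reduction so that $c_2(1-\varepsilon) < 1$ for the prescribed $\varepsilon$: essentially, the SAT instance must be encoded into a target sum of binary length at most $(1+o(1))N$. Standard Karp encodings introduce base-dependent overhead that can grow with $k$, so achieving this tight bound requires either a refined reduction (tuning the base and slack structure carefully) or an appeal to SETH lower bounds for $\mathsf{SS}$ already established in the fine-grained complexity literature. Once this tight encoding is in place, the remaining calculation is routine parameter chasing.
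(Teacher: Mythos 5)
Your proposal has a genuine gap precisely at its crux. The paper does not reprove this lemma; it simply cites Theorem~1.1 of Abboud, Bringmann, Hermelin, and Shabtay~\cite{Abb}, and the entire technical content of that result lies in the very step your third paragraph leaves unresolved. The sparsify-then-reduce skeleton you describe is sound, but the decisive requirement---that the $k$-SAT instance be encoded into a target sum $s$ with bit length $(1+o(1))N$---is not met by the Karp-style encoding you invoke. In that encoding the target has roughly $(N+m)\log(k+1)$ bits, and after sparsification $m = c_k N$ with $c_k \to \infty$ as $k \to \infty$; hence $\log_2 s$ is a constant multiple of $N$ that grows unboundedly with $k$. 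For small $\varepsilon$ this makes $s^{1-\varepsilon}$ alone exceed $2^{(1-\varepsilon^{*})N}$, and the parameter chasing in your third step does not close, no matter how small $\delta$ and $\mu$ are chosen. This is exactly why a ``much tighter'' reduction is needed, as the paper explicitly remarks: the reduction in~\cite{Abb} is considerably more intricate than Karp's, using variable grouping and a weight construction that trades magnitude against the number of weights so that $\log_2 s$ stays close to $N$ independently of $k$. You correctly identify this obstacle, but you present ``appeal to SETH lower bounds for $\mathsf{SS}$ already established in the fine-grained complexity literature'' as an optional alternative to be filled in later; in fact that citation \emph{is} the paper's proof, so your proposal as written establishes the reduction framework but not the lemma itself.
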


\begin{proof}
	This is Theorem 1.1 in \cite{Abb}. The proof establishes a reduction from $k$-SAT to subset-sum instances that provides a much tighter lower bound than previous reductions, such as those from standard NP-hardness proofs.
\end{proof}

Consider subset-sum problem instances $\mathsf{SS}(a_1,\ldots,a_n,s)$ as defined in \Cref{def:ssum}. We start by showing that we can impose a restriction on the number of weights without losing hardness.

\begin{definition} \label{def:2ss}
	We define the \emph{$1/2$-full subset-sum problem}, $\mathsf{\mathsf{SS}_{1/2}}$, as follows:
	Given $s\in\N$ and non-negative integers $a_1,\ldots,a_n$ with $n$ being even, decide whether
	\begin{itemize}
		\item{$\exists (x_1,\ldots,x_n)\in\{0,1\}^{n}: \sum_{i=1}^n a_ix_i=s \wedge |\{i:x_i=1\}|=\frac{n}{2}$ (YES-instance)}
		\item{$\nexists (x_1,\ldots,x_n)\in\{0,1\}^{n}: \sum_{i=1}^n a_ix_i=s \wedge |\{i:x_i=1\}|=\frac{n}{2}$ (NO-instance)}		
	\end{itemize}
\end{definition}

\begin{lemma}\label{lem:2ss}
	There is a deterministic polynomial-time reduction from  $\mathsf{SS}$ to $\mathsf{SS}_{1/2}$ that reduces instances with $n$ weights to instances with $2n$ weights.
\end{lemma}

\begin{proof}
	Let $\mathsf{SS}(a_1,\ldots,a_n,s)$ be any subset-sum problem instance. 
	Let 
	\[
	\mathsf{SS}_{1/2}(a_1,\ldots,a_n,0,\ldots,0,s),
	\] 
	be an instance of $\mathsf{SS}_{1/2}$ where $n$ weights of $0$s are added in addition to the $n$ weights $a_1,\ldots,a_n$ of the original problem. One easily observes that $\mathsf{SS}(a_1,\ldots,a_n,s)$ has a solution if and only if $\mathsf{SS}_{1/2}(a_1,\ldots,a_n,0,\ldots,0,s)$ has a solution.
\end{proof}

\begin{definition} \label{def:npss}
	Let $c\in(0,1)$ be a constant. We define the \emph{c-full subset-sum problem}, $\mathsf{\mathsf{SS}}_{c}$, as follows:
	Given $s\in\N$ and non-negative integers $a_1,\ldots,a_n$ such that $cn\in\N$, decide whether
	\begin{itemize}
		\item{$\exists (x_1,\ldots,x_n)\in\{0,1\}^{n}: \sum_{i=1}^n a_ix_i=s \wedge |\{i:x_i=1\}|=cn$ (YES-instance)}
		\item{$\nexists (x_1,\ldots,x_n)\in\{0,1\}^{n}: \sum_{i=1}^n a_ix_i=s \wedge |\{i:x_i=1\}|=cn$ (NO-instance)}		
	\end{itemize}
By $\mathsf{SS}_{c}^{p,\alpha}$, we denote the c-full subset-sum problem that is restricted to inputs $a_1,\ldots,a_n$ that are \emph{not} $p$-dependent with respect to $\alpha$ (see \Cref{def:rel}).
\end{definition}

\begin{lemma}\label{lem:npss}
	Let $m\in\N$. For both $c=(m+1/2)/(m+1)$ and $c=1/(2(m+1))$, there is a deterministic polynomial-time reduction from  $\mathsf{SS}_{1/2}$ to $\mathsf{\mathsf{SS}}_{c}$ that reduces instances with $n$ weights to instances with $(m+1)n$ weights.
\end{lemma}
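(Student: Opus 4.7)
The plan is to give a padding reduction that turns any $\mathsf{SS}_{1/2}$ instance $(a_1,\ldots,a_n,s)$ with $n$ even into an $\mathsf{SS}_c$ instance on $N=(m+1)n$ weights, where $mn$ auxiliary ``pads'' are appended whose weights are chosen so that the number of ones they contribute is \emph{forced} in every solution. Adding this forced count to the required $n/2$ hits from the original weights should match $cN$ exactly, giving a bijective correspondence between YES-instances. Since both target values satisfy $cN\in\N$ (using that $n$ is even), the instance is admissible in the sense of \Cref{def:npss}.

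For $c=1/(2(m+1))$, note that $cN=n/2$, so the pads must contribute \emph{zero} ones. I would append $mn$ weights all equal to $s+1$ and keep the target $s$. Every individual pad already exceeds the target and all weights are non-negative, so no pad can appear in any solution; hence a $\mathsf{SS}_c$-solution consists of exactly $n/2$ of the $a_i$'s summing to $s$, and conversely any $\mathsf{SS}_{1/2}$-solution trivially extends by setting all pads to $0$.

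For $c=(m+1/2)/(m+1)$, we have $cN=mn+n/2$, so the pads must contribute \emph{all} of $mn$ ones. I would append $mn$ pads all equal to a single large value $W$ and set the new target to $s+mnW$, choosing for instance $W=s+\sum_{i=1}^n a_i+1$ (polynomial in the input size). If any single pad were omitted, the maximum attainable sum would be at most $(mn-1)W+\sum_i a_i<mnW<s+mnW$, so the target could not be reached; hence every pad must be included. With the pads fixed to $1$, the originals are constrained to provide exactly $n/2$ ones summing to $s$, which is precisely a $\mathsf{SS}_{1/2}$-solution.

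The main obstacle I foresee is verifying the backward direction in the second case: ruling out a ``cheat'' where dropping some pad is compensated by clever use of the remaining pads and originals. The strict inequality $W>\sum_i a_i - s$ handles exactly this, because any omission of $k\geq 1$ pads would demand $kW$ extra mass from the originals, which is impossible by choice of $W$. Once this is checked, both constructions run in deterministic polynomial time, the bit-length of the new weights and target is polynomial in the input, and the equivalence of YES-instances is immediate in both directions, completing the reduction.
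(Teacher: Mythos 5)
Your proposal is correct and follows essentially the same padding approach as the paper: append $mn$ auxiliary weights, choose a target, and force the count constraint to pin the number of pads in any solution. The difference is in the concrete choice of pad weights, and yours is in fact slightly more robust. For $c=(m+1/2)/(m+1)$ the paper takes arbitrary $b_i>\sum_i a_i$ and target $s+\sum_i b_i$, while you use a single $W=s+\sum_i a_i+1$ and target $s+mnW$; both force inclusion of every pad for the same reason. For $c=1/(2(m+1))$, however, the paper keeps pads $b_i>\sum_i a_i$ and target $s$ and asserts the pads must be omitted; this justification implicitly needs $b_i>s$, which is not guaranteed when $s>\sum_i a_i$ (e.g., if $s=\sum_i a_i+1$, one could include a pad equal to $\sum_i a_i+1$ together with $n/2-1$ zero weights and obtain a spurious solution). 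Your choice $b_i=s+1$ sidesteps this edge case cleanly, since any included pad immediately overshoots the target. So: same route, but your explicit pad values close a small gap in the paper's argument for the second value of $c$. One cosmetic note: your chain $(mn-1)W+\sum_i a_i<mnW<s+mnW$ uses $s\geq 1$; if $s=0$ were allowed you would want $\leq$ in the second step, but the conclusion ($<s+mnW$) still holds from the first strict inequality alone.
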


\begin{proof}
	Let $\mathsf{SS}_{1/2}(a_1,\ldots,a_n,s)$ be any $\mathsf{SS}_{1/2}$ instance. We start by considering the case $c:=(m+1/2)/(m+1)$. Let $b_1,\ldots,b_{mn}$ be any integers greater than $\sum_{i=1}^n a_i$. Denote $\beta:=\sum_{i=1}^{mn} b_i$ and consider the target sum $s':=s+\beta$ together with the $mn+n$ integers
	$
	a_1,\ldots,a_n,b_1,\ldots,b_{mn}.
	$ 
	Note that
	$
	c(mn+n)=mn+n/2\in\N.
	$
	Clearly, any solution of $\mathsf{\mathsf{SS}}_{c}(a_1,\ldots,a_n,b_1,\ldots,b_{mn},s')$ needs to contain all the $b_i$ and exactly $n/2$ weights among the $a_i$. 
	As a consequence, one easily observes that it has a solution if and only if $\mathsf{SS}_{1/2}(a_1,\ldots,a_n,s)$ has a solution. 
	
	Let us now prove the case $c:=1/(2(m+1))$. We consider the same $mn+n$ integers
	$
	a_1,\ldots,a_n,b_1,\ldots,b_{nm},
	$
	as above, but change the target to $s':=s$. Note that $c(mn+n)=n/2\in\N$. Now any solution of $\mathsf{\mathsf{SS}}_{c}(a_1,\ldots,a_n,b_1,\ldots,b_{mn},s')$ must omit the $b_i$ and contain exactly $n/2$ weights among the $a_i$.  Again we obtain that $\mathsf{\mathsf{SS}}_{c}(a_1,\ldots,a_n,b_1,\ldots,b_{mn},s')$ has a solution if and only if $\mathsf{SS}_{1/2}(a_1,\ldots,a_n,s)$ has a solution. 
\end{proof}

\begin{lemma}\label{lem:sschard}
	Let $m\in\N$. For both $c=(m+1/2)/(m+1)$ and $c=1/(2(m+1))$, the following holds.
	\begin{enumerate}
		\item{$\mathsf{\mathsf{SS}}_{c}$ is \emph{NP}-hard.}
		\item{Assuming $\mathsf{ETH}$, $\mathsf{SS}_{c}$ cannot be solved in time $2^{o(n)}$.}
	\end{enumerate}
\end{lemma}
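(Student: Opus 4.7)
The plan is to chain the reductions and lower bounds that we have already established. For part (1), I would simply observe that \Cref{lem:ssnp} gives NP-hardness of $\mathsf{SS}$, \Cref{lem:2ss} gives a deterministic polynomial-time reduction $\mathsf{SS}\to\mathsf{SS}_{1/2}$, and \Cref{lem:npss} gives a deterministic polynomial-time reduction $\mathsf{SS}_{1/2}\to\mathsf{SS}_c$ for both stated values of $c$. Composing these two reductions yields a deterministic polynomial-time reduction from the NP-hard problem $\mathsf{SS}$ to $\mathsf{SS}_c$, which transfers NP-hardness immediately.

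For part (2), I would track the blowup of the parameter $n$ through the composed reduction. Starting from an $\mathsf{SS}$ instance with $n$ weights and target $s$, the reduction in \Cref{lem:2ss} produces an $\mathsf{SS}_{1/2}$ instance with $2n$ weights and target $s$. The reduction in \Cref{lem:npss} then produces an $\mathsf{SS}_c$ instance with $N:=2(m+1)n$ weights and a new target $s'$ whose bit length is polynomial in the input size. I would then argue by contradiction: suppose that for every $\delta'>0$, $\mathsf{SS}_c$ is decidable in $O(2^{\delta' N})$-time. Fix any $\varepsilon>0$ and let $\delta=\delta(\varepsilon)>0$ be the constant supplied by \Cref{lem:seth}. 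Choose $\delta'$ so small that $2(m+1)\delta'<\delta$. The assumed algorithm for $\mathsf{SS}_c$, composed with the polynomial-time reduction, then solves an arbitrary $\mathsf{SS}$ instance in total time
\[
\mathrm{poly}(n,\log s)+O\!\left(2^{2(m+1)\delta' n}\right),
\]
which we want to identify as $O(s^{1-\varepsilon}2^{\delta n})$ and thus contradict \Cref{lem:seth}.

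The main obstacle I anticipate is the slightly delicate bookkeeping needed to absorb the additive polynomial cost of the reduction into the $O(s^{1-\varepsilon}2^{\delta n})$ bound. I would handle this by splitting on the size of $s$: if $s$ is so small that $s^{1-\varepsilon}$ does not dominate the polynomial overhead, then standard dynamic programming already solves $\mathsf{SS}$ in $\mathrm{poly}(n,s)$-time and there is nothing to prove; otherwise the polynomial term is swallowed by $s^{1-\varepsilon}2^{\delta n}$ and the strict inequality $2(m+1)\delta'<\delta$ makes the exponential term $O(2^{\delta n})$, yielding the desired contradiction with \Cref{lem:seth}.
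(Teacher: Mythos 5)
Your proof is correct and follows essentially the same reduction-chaining approach as the paper: part (1) composes \Cref{lem:ssnp}, \Cref{lem:2ss}, and \Cref{lem:npss}, and part (2) tracks the fixed multiplicative blowup $n\mapsto 2(m+1)n$ through the reduction to contradict \Cref{lem:seth}. The concluding case split on the size of $s$ is unnecessary: for a fixed $\delta>0$ the term $2^{\delta n}$ already dominates the polynomial reduction overhead (which in the paper's unit-cost word model is $\mathrm{poly}(n)$ regardless of $\log s$), and since $s\geq 1$ and $\varepsilon\leq 1$ the inclusion $O(2^{\delta n})\subseteq O(s^{1-\varepsilon}2^{\delta n})$ holds outright, exactly as the paper uses it.
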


\begin{proof}
	The first statement follows directly from the reductions in \Cref{lem:2ss} and \Cref{lem:npss} and from \Cref{lem:ssnp}. 
	
	For proving the second statement, assume to the contrary that $\mathsf{SS}_{c}$ can be solved in time $2^{o(n)}$. Let $\mathsf{SS}(a_1,\ldots,a_l,s)$ be any subset-sum problem instance. Considering \Cref{lem:2ss} and \Cref{lem:npss}, one observes that this $\mathsf{SS}$ instance can be reduced to a $\mathsf{SS}_c$ instance with $n=Ml$ weights, where $M:=2(m+1)$ is a fixed constant. Hence, our assumption allows us to solve this $\mathsf{SS}_c$ instance and, thus, our initial $\mathsf{SS}$ instance, in time 
	$
	2^{o(Ml)}=2^{o(l)}.
	$
Since the instance was arbitrary, \Cref{lem:eth} implies that $\mathsf{ETH}$ is false, which we wanted to show.	
\end{proof}

The most crucial step towards our main results is the following lemma, which concerns a reduction from $\mathsf{SS}_{c}^{p,\alpha}$ instances to GapSVP instances. We exploit both the $p$-independency of the weights of $\mathsf{SS}_{c}^{p,\alpha}$ and the restriction on the number of weights in its solutions. These two properties can be used to prove that the shortest vector in a lattice similar to the one discussed at the end of \Cref{sec:rel_work} must always correspond to a solution of $\mathsf{SS}_{c}^{p,\alpha}$. In the proofs of the main results later in this section, we will then show that $\mathsf{SS}_{c}^{p,\alpha}$ instances are just $\mathsf{SS}_{c}$ instances for certain families of inputs $p,\alpha$ and $n$, which allows us to apply the hardness results of \Cref{lem:sschard}.

\begin{lemma}\label{lem:lattice}
	Let $p\geq 1$, $0<\varepsilon\leq 1$ and $m\in\N$ such that
	\begin{equation}\label{eq:assumptions}
	c:=\frac{m+1/2}{m+1}>1-\frac{\varepsilon}{4} \hspace{12pt}\text{ and }\hspace{12pt} \alpha:=1-\frac{\varepsilon}{2}.
	\end{equation}
	Let $\mathsf{SS}_{c}^{p,\alpha}(a_1,\ldots, a_n, s)$ be an instance of $\mathsf{SS}_{c}^{p,\alpha}$. Define $N:=\lfloor \alpha\cdot n^{1/p}\rfloor + 1$ and $r:=cn\in\N$ and consider the lattice $\mathcal{L}_r\subseteq \Z^{n+2}$ spanned by the vectors
	\begin{align*}
		\textbf{b}_1 &= (1,0,\ldots,0,0,N,Na_1),\\
		\textbf{b}_2 &= (0,1,\ldots,0,0,N,Na_2),\\
		&\vdots\\
		\textbf{b}_{n}&=(0,0,\ldots,0,1,N,Na_n),\\
		\textbf{b}_{n+1}&=\left(\frac{1}{2},\frac{1}{2},\ldots,\frac{1}{2},Nr,Ns\right).
	\end{align*}
	If either $\varepsilon=1$ or $n<(3^p-1)/((2-\varepsilon)^p-1)$ for $\varepsilon<1$, the following hold:
	\begin{enumerate}
		\item{If $(x_1,\ldots,x_n)$ solves $\mathsf{SS}_{c}^{p,\alpha}(a_1,\ldots, a_n, s)$, then
			\[
			\left(x_1-\frac{1}{2},\ldots,x_n-\frac{1}{2},0,0\right) 
			\]
			is a shortest vector of $\mathcal{L}_r$, and it holds that $\lambda_{1,p}(\mathcal{L}_r)=n^{1/p}/2$.}
		\item{If $||\textbf{v}||_p\leq (2-\varepsilon)\cdot n^{1/p}/2$ for some vector $\textbf{v}=(v_1,\ldots,v_{n+2})\in\mathcal{L}_r$, then either $(v_1+\frac{1}{2},v_2+\frac{1}{2},\ldots, v_{n}+\frac{1}{2})$ or $(-v_1-\frac{1}{2},-v_2-\frac{1}{2},\ldots, -v_{n}-\frac{1}{2})$ is a solution of $\mathsf{SS}_{c}^{p,\alpha}(a_1,\ldots, a_n, s)$.}
	\end{enumerate}
\end{lemma}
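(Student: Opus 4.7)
The plan is to prove part (2) first and then derive part (1) at the end. Every $\textbf{v}\in\mathcal{L}_r$ can be uniquely written as $\textbf{v}=\sum_{i=1}^n y_i\textbf{b}_i+k\textbf{b}_{n+1}$ with integers $y_i,k$, producing coordinates $v_i=y_i+k/2$ for $i\le n$, $v_{n+1}=N(\sum_i y_i+kr)$, and $v_{n+2}=N(\sum_i a_iy_i+ks)$. Throughout I assume $\|\textbf{v}\|_p\le (2-\varepsilon)n^{1/p}/2=\alpha\cdot n^{1/p}$. Since $N>\alpha\cdot n^{1/p}$, the two $N$-multiples in the last coordinates must vanish, giving the orthogonality identities $\sum y_i=-kr$ and $\sum a_iy_i=-ks$. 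If $k=0$, then $\textbf{y}$ satisfies both orthogonality conditions of \Cref{def:rel} with $\|\textbf{y}\|_p=\|\textbf{v}\|_p\le\alpha\cdot n^{1/p}$, so the hypothesis that $a_1,\ldots,a_n$ are not $p$-dependent with respect to $\alpha$ forces $\textbf{y}=0$ and hence $\textbf{v}=0$.

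Assuming $\textbf{v}\neq 0$ (so $k\neq 0$), the crucial step is to show $|k|=1$. Summing the first $n$ coordinate identities yields $\sum_i v_i = kn(1/2-c) = -kmn/(2(m+1))$ by the explicit form of $c$. Hölder's inequality (\Cref{lem:holder} with $q=1$) then bounds $|{\textstyle\sum_i v_i}|\le n^{1-1/p}\|\textbf{v}\|_p\le\alpha\cdot n$, so $|k|\le (2-\varepsilon)(1+1/m)$. Unpacking $c>1-\varepsilon/4$ gives $m>2/\varepsilon-1$, which makes this upper bound strictly less than $2$; being a non-zero integer, $|k|=1$.

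After possibly replacing $\textbf{v}$ by $-\textbf{v}$ I may take $k=-1$, so $v_i=y_i-1/2$ with $\sum y_i=r$ and $\sum a_iy_i=s$, and it remains to force $y_i\in\{0,1\}$ for all $i$. Indices with $y_i\in\{0,1\}$ contribute $(1/2)^p$ each to $\|\textbf{v}\|_p^p$, while any other index contributes at least $(3/2)^p$. If $t$ counts the good indices, comparison with $\|\textbf{v}\|_p^p\le((2-\varepsilon)/2)^p n$ yields $n-t\le n((2-\varepsilon)^p-1)/(3^p-1)$, whose right-hand side equals $0$ when $\varepsilon=1$ and is strictly less than $1$ under the assumption $n<(3^p-1)/((2-\varepsilon)^p-1)$ when $\varepsilon<1$. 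Either way $t=n$, so $(y_i)_i\in\{0,1\}^n$ is a solution of $\mathsf{SS}_c^{p,\alpha}(a_1,\ldots,a_n,s)$, recovered as $(v_i+1/2)_i$ (and the alternative expression $(-v_i-1/2)_i$ handles the case once the sign flip is undone). Part (1) is then immediate: the vector $\sum x_i\textbf{b}_i-\textbf{b}_{n+1}$ built from a solution has $\ell_p$-norm $n^{1/p}/2$ by direct computation, and any strictly shorter lattice vector would, by part (2), also come from a solution and hence share this norm, a contradiction.

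The main obstacle I anticipate is the $|k|=1$ step: the Hölder estimate by itself only yields $|k|<(2-\varepsilon)(1+1/m)$, and crossing the threshold $|k|<2$ relies entirely on the specific quantitative form of $c>1-\varepsilon/4$, thereby tying together the approximation factor $2-\varepsilon$, the density parameter $c$ and the non-$p$-dependency scale $\alpha$ in a single sharp inequality. The subsequent counting step is routine in spirit, but this is where the hypothesis $n<(3^p-1)/((2-\varepsilon)^p-1)$ enters and caps the admissible dimensions relative to $p$, thus governing the quantitative trade-off exploited in the main theorems.
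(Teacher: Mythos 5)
Your proof is correct and follows essentially the same approach as the paper: force $v_{n+1}=v_{n+2}=0$ via the $N$-scaling, use Hölder to bound the coefficient $k$ of $\textbf{b}_{n+1}$ below $2$, handle $k=0$ via non-$p$-dependency, and handle $|k|=1$ via the norm threshold $n<(3^p-1)/((2-\varepsilon)^p-1)$. The only cosmetic differences are that you exploit the exact identity $\sum_i v_i = kn(1/2-c)$ rather than the paper's inequality chain for $\sum_i y_i$, and your counting of "bad" indices is a mild repackaging of the paper's argument that a single bad index already violates the norm bound.
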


\begin{proof}
	By definition, any solution of $\mathsf{SS}_{c}^{p,\alpha}(a_1,\ldots, a_n, s)$  has exactly $r$ non-zero entries in the corresponding vector $(x_1,\ldots,x_n)$. One easily observes that $\mathcal{L}_r$ contains the vector
	$
	\textbf{s}=(s_1,\ldots,s_{n},0,0), 
	$
	where $s_{i}=x_i-1/2$. Note that
	$
	||\textbf{s}||_p^p =n/2^p. 
	$ 
	Let $\mathcal{S}$ be the set that, for all solutions of $\mathsf{SS}_{c}^{p,\alpha}(a_1,\ldots, a_n, s)$, contains $\textbf{s}$ and -$\textbf{s}$. Our goal is to show that $\mathcal{S}$ is the complete set of $(2-\varepsilon)$-approximate shortest vectors in $\mathcal{L}_r$. Consider all vectors $\hat{\textbf{x}}=(\hat{x}_1,\ldots,\hat{x}_{n+2})$ which satisfy the three conditions
	\begin{align*}
		||\hat{\textbf{x}}||_p&\leq (2-\varepsilon)\cdot n^{1/p}/2=\alpha\cdot n^{1/p},\\
		\hat{\textbf{x}}&\in \mathcal{L}_r,\\
		\hat{\textbf{x}}&\not\in\mathcal{S}\cup\{\textbf{0}\}.
	\end{align*}
	In order to prove both claims (1) and (2), it is enough to show that no such vector exists. Assume to the contrary that it does. Then $N>\alpha\cdot n^{1/p}$ implies $\hat{x}_{n+1}=0$ and $\hat{x}_{n+2}=0$.
	Writing $\hat{\textbf{x}}=\sum_{i=1}^{n} y_i \textbf{b}_i + y \textbf{b}_{n+1}$ establishes the following equations.
	\begin{align}
		\hat{x}_i = & y_i + y/2 \text{ for $1\leq i\leq n$},\label{e1}\\
		0 = & \hat{x}_{n+1} =N\left( y r + \sum_{i=1}^{n} y_{i}\right),\label{e2}\\
		0 = & \hat{x}_{n+2} = N\left(\sum_{i=1}^{n} a_{i} y_{i} + ys\right).\label{e4}
	\end{align}
	We now prove that $|y|<2$. We start by showing that, for all $\hat{\textbf{x}}$ satisfying our conditions,
	\begin{align}
		\left|\sum_{i=1}^{n} y_{i}\right|\leq \frac{n(|y|+2-\varepsilon)}{2}.\label{e7}
	\end{align}
	As a consequence of (\ref{e1}), we have $|y_{i}|=|\hat{x}_{i}-y/2|\leq |\hat{x}_{i}|+|y/2|$, from which it follows that
	\[
	\left|\sum_{i=1}^{n} y_{i}\right|\leq \frac{n|y|}{2} + \sum_{i=1}^n|\hat{x}_{i}|.
	\]
	Since $||\hat{\textbf{x}}||_p\leq \alpha\cdot n^{1/p}$ and $\hat{x}_{n+1}=\hat{x}_{n+2}=0$, applying \Cref{lem:holder} to the first $n$ coordinates of $\hat{\mathbf{x}}$ implies that
	\begin{align*}
		\sum_{i=1}^n|\hat{x}_{i}|=||\hat{\textbf{x}}||_1\leq n^{1-1/p}||\hat{\textbf{x}}||_p
		\leq \alpha\cdot n = (2-\varepsilon)\cdot n/2.
	\end{align*}
	We conclude that (\ref{e7}) holds. Together with (\ref{e2}), we obtain
	\[
	r=\frac{1}{|y|}\left|\sum_{i=1}^{n} y_{i}\right|\leq \frac{n}{2}\left(1+\frac{2-\varepsilon}{|y|}\right).
	\]
	For $|y|\geq 2$, using (\ref{eq:assumptions}) in this inequality yields
	$
	r\leq (1-\varepsilon/4)\cdot n < cn = r,
	$
	a contradiction. We conclude that $|y|$ has to be smaller than $2$.

	We are hence left with the cases $y=0,1,-1$. Let $y=-1$ and note that $|\hat{x}_{i}|=|y_{i}-1/2|$ due to (\ref{e1}). Assume first that
	\begin{align}
		y_{i}=0 \,\,\,\vee\,\,\, y_{i}=1\label{e9}
	\end{align}
	holds for all $i\leq n$. We either have $\hat{x}_{i}=0-1/2=-1/2$ or $\hat{x}_{i}=1-1/2=1/2$. In any case, $|\hat{x}_{i}|^p=1/2^p$ for all $i$, and $||\hat{\textbf{x}}||_p=n^{1/p}/2$. However, $(\ref{e4})$ yields a solution to $\mathsf{SS}_{c}^{p,\alpha}$. Hence, the third condition $\hat{\textbf{x}}\not\in\mathcal{S}\cup\{\textbf{0}\}$ is not satisfied. 
	
	Assume now that there is at least one $i$ for which $(\ref{e9})$ is not true. It follows that $|\hat{x}_{i}|=|y_{i}-1/2|\geq 3/2$ and, hence,
	\[
	||\hat{\textbf{x}}||_p\geq ((n-1)/2^p+(3/2)^p)^{1/p}.
	\]
	We apply our assumptions to prove that $((n-1)/2^p+(3/2)^p)^{1/p}>(2-\varepsilon)\cdot n^{1/p}/2$. If $\varepsilon=1$, it is easy to see that this is true. We may hence focus on $\varepsilon<1$, where we assumed that $n<(3^p-1)/((2-\varepsilon)^p-1)$. This is equivalent to
	\begin{align*}
	&\hspace{28pt} 3^p > (2-\varepsilon)^p\cdot n-(n-1)\\
	&\Leftrightarrow\hspace{12pt} (3/2)^p +(n-1)/2^p > (2-\varepsilon)^p\cdot n/2^p,\\
	&\Leftrightarrow\hspace{12pt} ((n-1)/2^p+(3/2)^p)^{1/p}>(2-\varepsilon)\cdot n^{1/p}/2.
	\end{align*}
	It follows that $||\hat{\textbf{x}}||_p > (2-\varepsilon)\cdot n^{1/p}/2=\alpha\cdot n^{1/p}$. Hence, for $y=-1$, no $\hat{\textbf{x}}$ exists that satisfies the conditions. With very similar arguments, the same can be proved for $y=1$.  
	
	Let us now deal with the final remaining case, $y=0$. From $(\ref{e2})$ we may deduce that $\sum_{i=1}^ny_{i}=0$. Define $\textbf{y}=(y_1,y_2,...,y_{n})\in \Z^n$, then (\ref{e1}) implies that
	\[
	||\textbf{y}||_p=||\hat{\textbf{x}}||_p\leq (2-\varepsilon)\cdot n^{1/p}/2=\alpha\cdot n^{1/p}.
	\]
	Since $\hat{\textbf{x}}\neq\textbf{0}$ we also have $\textbf{y}\neq\textbf{0}$. Together with $(\ref{e4})$, this entails that $a_1,\ldots,a_n$ are $p$-dependent with respect to $\alpha$, a contradiction. So for $y=0$, there is no $\hat{\textbf{x}}$ satisfying the conditions. This finishes the proof.
\end{proof}

\begin{corollary}\label{cor:reduction}
	Let $p\geq 1$, $c=5/6$ and $\alpha=1/2$. There is a deterministic polynomial-time reduction from $\mathsf{SS}_{c}^{p,\alpha}$ to $\mathsf{GapSVP}_p$ that reduces instances with $n$ weights to instances with a lattice of rank $n+1$.
\end{corollary}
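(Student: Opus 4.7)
The plan is to apply Lemma~\ref{lem:lattice} with the parameter choice $m=2$ and $\varepsilon=1$, for which one computes $c=(m+1/2)/(m+1)=5/6$ and $\alpha=1-\varepsilon/2=1/2$, matching the statement of the corollary. Because $\varepsilon=1$, the disjunctive hypothesis ``either $\varepsilon=1$ or $n<(3^p-1)/((2-\varepsilon)^p-1)$'' in Lemma~\ref{lem:lattice} is trivially satisfied, so no side condition on $n$ is needed and the reduction applies uniformly in $n$ and $p$.

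The reduction takes an arbitrary instance $\mathsf{SS}_c^{p,\alpha}(a_1,\ldots,a_n,s)$, constructs the lattice $\mathcal{L}_r$ described in Lemma~\ref{lem:lattice} (with $r=cn=5n/6\in\N$), and outputs the $\mathsf{GapSVP}_p$ instance consisting of $\mathcal{L}_r$ together with distance threshold $\delta:=n^{1/p}/2$. To avoid irrationals in the output representation, one may encode the threshold through its $p$-th power $\delta^p=n/2^p$, since all relevant comparisons of $\ell_p$-norms with $\delta$ can equivalently be phrased as comparisons of their $p$-th powers; if desired, one may also scale the basis by $2$ to obtain an integer lattice, which merely rescales $\delta$ accordingly.

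For correctness, recall that $\mathsf{GapSVP}_p$ is the \emph{exact} decision problem, so $\gamma=1$ and $2-\varepsilon=1$. By Lemma~\ref{lem:lattice}(1), a YES-instance of $\mathsf{SS}_c^{p,\alpha}$ yields a lattice vector of $\ell_p$-norm exactly $n^{1/p}/2=\delta$, and hence $\lambda_{1,p}(\mathcal{L}_r)\leq\delta$ (YES for $\mathsf{GapSVP}_p$). Conversely, if $\lambda_{1,p}(\mathcal{L}_r)\leq\delta=(2-\varepsilon)\cdot n^{1/p}/2$, then Lemma~\ref{lem:lattice}(2) applied to a shortest non-zero vector of $\mathcal{L}_r$ produces a solution; taking the contrapositive shows that any NO-instance of $\mathsf{SS}_c^{p,\alpha}$ maps to a NO-instance of $\mathsf{GapSVP}_p$.

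Finally, polynomial-time boundedness is immediate: $\mathcal{L}_r$ has $n+1$ basis vectors in an ambient space of dimension $n+2$ whose entries are bounded by $N\cdot\max(s,\max_i a_i)$ with $N=\lfloor\alpha\cdot n^{1/p}\rfloor+1\leq n$, so the output has size polynomial in the input size. I do not anticipate any real obstacle: all substantive work is carried out inside Lemma~\ref{lem:lattice}, and the corollary's only task is to pick $m$ and $\varepsilon$ so that the concrete values $c=5/6$ and $\alpha=1/2$ are realised and the hypothesis on $n$ holds vacuously.
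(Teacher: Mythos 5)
Your proof is correct and follows essentially the same route as the paper's: apply Lemma~\ref{lem:lattice} with $m=2$ and $\varepsilon=1$ (so the side condition on $n$ holds vacuously), construct $\mathcal{L}_r$, and set $\delta=n^{1/p}/2$, with parts (1) and (2) of the lemma giving the YES/NO correspondence. Your additional remarks on representing the threshold via $\delta^p$, scaling to an integer lattice, and bounding the output size are sensible clarifications that the paper leaves implicit, and your explicit note that Lemma~\ref{lem:lattice}(2) should be applied to a shortest \emph{non-zero} vector correctly fills a small imprecision in the lemma's phrasing.
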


\begin{proof}
	Let $\mathsf{SS}_{c}^{p,\alpha}(a_1,\ldots, a_n, s)$ be any instance of $\mathsf{SS}_{c}^{p,\alpha}$, and apply \Cref{lem:lattice} with the given value for $p$, with $\varepsilon=1$ and $m=2$. Note that the inequality in (\ref{eq:assumptions}) is satisfied. We compute $N$ and, for $r=5n/6$, construct the lattice $\mathcal{L}_r$ spanned by the $\textbf{b}_i$ in negligible time. Then the statements (1) and (2) guarantee that, for the distance threshold $\delta:=n^{1/p}/2$, $\mathsf{SS}_{c}^{p,\alpha}(a_1,\ldots, a_n, s)$ is a YES-instance if and only if $\mathsf{GapSVP}_p(\mathcal{L}_r,\delta)$ is a YES-instance.
\end{proof}

Finally, we may prove our two main results on $\gamma$-$\mathsf{GapSVP}_{\mathrm{par}}$ and $\gamma$-$\mathsf{GapSVP}_{\infty}$ with approximation factors of the shape $\gamma=2-\varepsilon$.

\begin{proof}[Proof of \Cref{thm:main}]
	Let $0<\varepsilon\leq 1$ be arbitrary. Set $\alpha:=1-\varepsilon/2$ and, for a sufficiently large $m\in\N$, set $c:=(m+1/2)/(m+1)>1-\varepsilon/4$.
	Assume to the contrary that there exists an algorithm that solves $(2-\varepsilon)$-$\mathsf{GapSVP}_{\mathrm{par}}$ in time  $2^{2^{o(p)}}\cdot 2^{o(n)}$. We show that this implies that $\mathsf{ETH}$ is false.
	
	Let $\mathsf{SS}_{c}(a_1,\ldots,a_n,s)$ be any instance of $\mathsf{SS}_c$. For $p >(\log_2 n - 1)/\log_2(1/\alpha)$, \Cref{lem:rel} (4) implies that
	$
	\mathsf{SS}_{c}(a_1,\ldots,a_n,s)=\mathsf{SS}_{c}^{p,\alpha}(a_1,\ldots,a_n,s).
	$
	We compute the slightly larger value $\hat{p}:=\lceil\log_2 (n +1)/\log_2(1/\alpha)\rceil$, and apply \Cref{lem:lattice} with our values for  $\hat{p}$, $\varepsilon$ and $m$. We compute $N$ and, for $r=cn$, construct the lattice $\mathcal{L}_r$ spanned by the $\textbf{b}_i$ with negligible cost. We have 
	\[
	\hat{p}\geq \log_2 (n +1)/\log_2(2/(2-\varepsilon))>\log_2 (n +1)/\log_2(3/(2-\varepsilon)).
	\]
	Hence, for $\varepsilon<1$, it follows that
	\[
	n< \frac{3^{\hat{p}}}{(2-\varepsilon)^{\hat{p}}} -1 <\frac{3^{\hat{p}}}{(2-\varepsilon)^{\hat{p}}} - \frac{1}{(2-\varepsilon)^{\hat{p}}}<\frac{3^{\hat{p}}-1}{(2-\varepsilon)^{\hat{p}}-1}.
	\]
	Consequently, all requirements for \Cref{lem:lattice} are satisfied. The statements (1) and (2) guarantee that, for the distance threshold $\delta:=n^{1/\hat{p}}/2$, $\mathsf{SS}_{c}^{\hat{p},\alpha}(a_1,\ldots, a_n, s)$ is a YES-instance if and only if 
	\[
	(2-\varepsilon)\text{-}\mathsf{GapSVP}_{\hat{p}}(\mathcal{L}_r,\delta)= (2-\varepsilon)\text{-}\mathsf{GapSVP}_{\mathrm{par}}(\mathcal{L}_r,\delta,\hat{p})
	\]
	 is a YES-instance. It follows from our assumption that we can solve this instance in
	\[
	2^{2^{o(\hat{p})}}\cdot 2^{o(n)} =	2^{2^{o(\log n)}}\cdot 2^{o(n)} = 2^{o(n)}.
	\] 
	arithmetic operations. As a consequence of \Cref{lem:lattice}, the same is true for the considered $\mathsf{SS}_{c}$ instance.
	Since the instance was arbitrary, we obtain an algorithm for $\mathsf{SS}_{c}$ that runs in time $2^{o(n)}$. 
	Using \Cref{lem:sschard} (2), it follows that $\mathsf{ETH}$ is violated, which we wanted to show.
\end{proof}

\begin{proof}[Proof of \Cref{thm:main2}]
	Let $0<\varepsilon\leq 1$ be arbitrary. Set $\varepsilon':=\varepsilon/2$, $\alpha:=(2-\varepsilon)/(2-\varepsilon')$ and, for a sufficiently large $m\in\N$, set $c:=(m+1/2)/(m+1)>1-\varepsilon'/4$. We will show that $\mathsf{SS}_{c}$ reduces to $(2-\varepsilon)$-$\mathsf{GapSVP}_{\infty}$, and the stated result easily follows via \Cref{lem:2ss} and \Cref{lem:npss}.
	
	Let $\mathsf{SS}_{c}(a_1,\ldots,a_n,s)$ be any instance of $\mathsf{SS}_c$. We follow the proof of \Cref{thm:main} above. Since \Cref{lem:rel} (4) implies 
	$
	\mathsf{SS}_{c}(a_1,\ldots,a_n,s)=\mathsf{SS}_{c}^{p,\alpha}(a_1,\ldots,a_n,s)
	$
	for $p >(\log_2 n - 1)/\log_2(1/\alpha)$,
	we apply \Cref{lem:lattice} with 	$
	\hat{p}:=\lceil\log_2 (n + 2)/\log_2(1/\alpha)\rceil
	$, $\varepsilon'$ and $m$. One easily observes that 
	\[
	\hat{p}\geq \log_2 (n + 2)/\log_2((2-\varepsilon')/(2-\varepsilon))>\log_2 (n +1)/\log_2(3/(2-\varepsilon')),
	\]
	and that, hence, all requirements for \Cref{lem:lattice} are satisfied. It follows that, for the distance threshold $\delta:=n^{1/\hat{p}}/2$, $\mathsf{SS}_{c}^{\hat{p},\alpha}(a_1,\ldots, a_n, s)$ is a YES-instance if and only if 
	$(2-\varepsilon')\text{-}\mathsf{GapSVP}_{\mathrm{par}}(\mathcal{L}_r,\delta,\hat{p})$
	is a YES-instance. Since $\mathcal{L}_r$ is a lattice of dimension $n+2$, we observe that $\hat{p}$ is sufficiently large to apply \Cref{lem:infinity} with $\gamma'=2-\varepsilon'$ and $\gamma=2-\varepsilon$. As a consequence, we can decide $(2-\varepsilon')\text{-}\mathsf{GapSVP}_{\mathrm{par}}(\mathcal{L}_r,\delta,\hat{p})$ by deciding $(2-\varepsilon)\text{-}\mathsf{GapSVP}_{\infty}(\mathcal{L}_r,\delta)$. Retracing the arguments of the proof, we see that the same is true for $\mathsf{SS}_{c}(a_1,\ldots,a_n,s)$. Since this instance was arbitrary, the claim is shown.
\end{proof}

\section{Discussion}\label{sec:discuss}

Let us conclude by discussing some properties of the reduction proved in this paper, as well as possibilities to improve and extend the results, and other directions to explore in future research.

\begin{remark}(Scalability of the reduction)
	
	For the sake of convenience, we chose $\hat{p}$ slightly larger than required in the proof of \Cref{thm:main}. Considering $\varepsilon=1$ and $c=5/6$ like in \Cref{cor:reduction}, one observes that any instance $\mathsf{SS}_{5/6}(a_1,\ldots,a_n,s)$ can be reduced to an instance of $\mathsf{GapSVP}_{p}$ for $p> \log_2 n-1$. Following back the reductions (\Cref{lem:2ss} and \Cref{lem:npss}) to an original $\mathsf{SS}(a'_1,\ldots,a'_m,s')$ instance, we see that $\mathsf{SS}_{5/6}(a_1,\ldots,a_n,s)$ has six times as many weights as the original instance, i.e., $n=6m$. It follows that any subset-sum instance with $m$ weights can be reduced to an instance of $\mathsf{GapSVP}_{\hat{p}}$ for $\hat{p}:=\lceil \log_2 (6m)-1\rceil$, regardless of the density of the problem. For example, all subset-sum problem instances with $m=10000$ weights (or less) can be reduced to instances of $\mathsf{GapSVP}_{15}$, and instances with $m=10^{82}$ weights (or less) can be reduced to instances of $\mathsf{GapSVP}_{274}$. 
\end{remark}

\begin{remark}(Size of the next shortest vectors)
	
	For even $y$ (in particular, for the case $y=0$), the next shortest vector must contain at least two entries of $1$ and $-1$ in the components $\hat{x}_{i}$, leading to a total length of at least $2^{1/p}$. So for increasing $p$, we have $\lim_{p\to\infty} n^{1/p}/2=1/2$ for the shortest vector, but $\lim_{p\to\infty} 2^{1/p}=1$ for the next shortest vector. For odd $y$ (in particular for $y=\pm 1$), the next shortest vector must contain at least one entry $\pm 3/2$ in addition to the other entries of shape $\pm 1/2$, so for increasing $p$, we have  
	\[
	\lim_{p\to\infty} ((n-1)/2^p+(3/2)^p)^{1/p}=3/2.
	\]
	It thus seems like the approximation factor $\gamma=2-\varepsilon$ is optimal for our technique, at least in its currently applied form.	
	
	The idea of using $1/2$ in the components of $\textbf{b}_{n+1}$ has first been introduced in Coster et al.'s improvement  (\cite{CosJouMac}) for solving low-density subset-sum instances. As it increases the gap between the shortest vector and the next shortest vectors, it is also a crucial element of our approach. The resulting factor of $1/2$ in the length of the shortest vector allows us to deduce an upper bound for $n$ in \Cref{lem:rel} (4) that grows with $p$ and can be used to prove our main results. However, since we are working with subset-sum instances $\mathsf{SS}_{c}$ with $c$ close to $1$, it is natural to consider using $\textbf{b}_{n+1}=(c,c,\ldots,c,Nr,Ns)$ instead, which leads to a shortest lattice vector of size
	\[
	n^{1/p} (c(1-c)^p+c^p(1-c))^{1/p}.
	\]
	For fixed $p$ and $c\rightarrow 1$, this is indeed shorter than $n^{1/p}/2$ (although not short enough to prove NP-hardness of $\mathsf{GapSVP}_{p}$ directly). For growing $p$ and fixed $c<1$, however, this advantage vanishes. Unfortunately, it thus appears like our main results cannot be improved by using this replacement of $\textbf{b}_{n+1}$.
\end{remark}

\begin{remark}(SETH-hardness of $\mathsf{GapSVP}_{\mathrm{par}}$)
	
We have proved and formulated \Cref{thm:main} under $\mathsf{ETH}$ using \Cref{lem:eth}. However, it is also possible to use \Cref{lem:seth} instead to obtain a basis-sensitive refinement of \Cref{thm:main}. 
 Namely, consider $(2-\varepsilon)$-$\mathsf{GapSVP}_{\mathrm{par}}$ with its three inputs $\mathcal{L}=\mathcal{L}(\mathcal{B})$, $\delta$ and $p$, where the lattice $\mathcal{L}$ is given by a specific basis $\mathcal{B}$. Let $L=||\mathcal{B}||_\infty$ be the largest absolute value of any integer component in $\mathcal{B}$. Then for every $\alpha>0$ there is $\beta>0$ such that $(2-\varepsilon)$-$\mathsf{GapSVP}_{\mathrm{par}}$ cannot be solved in time
\[
2^{2^{o(p)}}\cdot L^{1-\alpha}\cdot 2^{\beta n},
\]
unless $\mathsf{SETH}$ fails. An analogous result transfers to $(2-\varepsilon)$-$\mathsf{GapSVP}_{\infty}$, with the bound $L^{1-\alpha}\cdot 2^{\beta n}$. This formulation is somewhat unconventional, since the parameter $L$ depends on the chosen basis representation and is not lattice invariant. However, it is natural for the subset-sum lattices produced by our reduction, where $L$ directly reflects the size of the weights of the source instance.
\end{remark}

\begin{remark}(Reduction from $\mathsf{GapSVP}_{\infty}$ to $\mathsf{GapSVP}_{\mathrm{par}}$)

Lemma 6.6 in \cite{BenGolStep} implies that $2$-$\mathsf{GapSVP}_{\infty}$ is ETH-hard, namely by applying the reduction from $k$-SAT with $k=3$ and approximation factor $1+2/(k-1)=2$. Analogous to \Cref{lem:infinity}, one can show that it is possible to decide $\gamma'$-$\mathsf{GapSVP}_{\infty}(\mathcal{L},\delta')$ by deciding $\gamma$-$\mathsf{GapSVP}_{\mathrm{par}}(\mathcal{L},\delta,q)$ for sufficiently large $q$ and $\gamma<\gamma'$. 
This suggests that one could derive an ETH-hardness result for $(2-\varepsilon)$-$\mathsf{GapSVP}_{\mathrm{par}}$ similar to the one proved in this paper, but based on a reduction from $\mathsf{GapSVP}_{\infty}$ instead of subset-sum. Interestingly, both routes lead to the same approximation factor.
\end{remark}

\begin{remark}(Fixed-parameter tractability of $\mathsf{GapSVP}_{\mathrm{par}}$)
	
Consider the \emph{fixed-parameter tractable} ($\mathsf{FPT}$) complexity class (\cite{FluGro}) consisting of parameterized problems that can be decided in running time
$
f(k)\cdot |x|^{O(1)}
$
where $|x|$ is the input size, $k\in\N$ is the parameter and $f$ is an arbitrary (computable) function only depending on $k$. Depending on the growth of $f$, we can also define the classes for (sub-)exponential fixed-parameter tractability $\mathsf{SUBEPT}$ and $\mathsf{EPT}$, and note that $\mathsf{SUBEPT}\subseteq \mathsf{EPT} \subseteq \mathsf{FPT}$. Our reduction implies that $(2-\varepsilon)$-$\mathsf{GapSVP}_{\mathrm{par}}$ is not in $\mathsf{EPT}$, i.e., cannot be solved in time $2^{O(p)}\cdot |x|^{O(1)}$, unless $\text{P}=\text{NP}$. However, the same non-membership in $\mathsf{EPT}$ follows for larger approximation factors via the reduction from $\mathsf{GapSVP}_{\infty}$ to $\mathsf{GapSVP}_{\mathrm{par}}$ discussed in the remark above, together with the NP-hardness of approximation results known for $\mathsf{GapSVP}_{\infty}$. 
\end{remark}

\begin{remark}(NP-hardness of $\mathsf{GapSVP}_{p}$ for fixed $p$)

Besides improving the construction of the lattice, \Cref{cor:reduction} gives rise to another direction for future research. It implies that we could show NP-hardness of $\mathsf{GapSVP}_{p}$ by proving NP-hardness of $\mathsf{SS}_{5/6}^{p,1/2}$. We plan on investigating the potential of this approach and will study the notion of $p$-dependency in this context.
\end{remark}

\begin{question}
	Is $\mathsf{SS}_{5/6}^{p,1/2}$ NP-hard for any $p\geq 1$?
\end{question}



\begin{thebibliography}{HD}
	
	\bibitem{Agg} D. Aggarwal and N. Stephens-Davidowitz, \emph{(Gap/S)ETH hardness of SVP}, In: Proceedings of the 50th Annual ACM SIGACT Symposium on Theory of Computing (STOC), 228--238, 2018.
	
	\bibitem{Ajt} M. Ajtai, \emph{The shortest vector problem in L2 is NP-hard for randomized reductions (extended abstract)}, In: Proceedings of the thirtieth annual ACM symposium on Theory of Computing, 10--19, 1998.
	
	\bibitem{Abb} A. Abboud, K. Bringmann, D. Hermelin, and D. Shabtay, \emph{SETH-based Lower Bounds for Subset Sum and Bicriteria Path}, ACM Trans. Algorithms, 18 (1), Article No. 6: 1--22, 2022.
	
	\bibitem{Ben} H. Bennett, \emph{The Complexity of the Shortest Vector Problem}, ACM SIGACT News, 54(1): 37--61, 2023.
	
	\bibitem{BenPei} H. Bennett, C. Peikert, \emph{Hardness of the (approximate) shortest vector problem: A simple proof via Reed-Solomon codes}, Approximation, Randomization, and Combinatorial Optimization. Algorithms and Techniques, 275: 37:1--37:20, 2023.
	
	\bibitem{BenCheGur}
	H. Bennett, M. Cheraghchi, V. Guruswami, and J. Ribeiro, \emph{Parameterized Inapproximability of the Minimum Distance Problem over All Fields and the Shortest Vector Problem in All $\ell_p$ Norms}, In: Proceedings of the 55th Annual ACM Symposium on Theory of Computing (STOC 2023), ACM, 553--566, 2023.
	
	\bibitem{BenGolStep} H. Bennett, A. Golovnev, and N. Stephens\textendash Davidowitz, 
	\emph{On the Quantitative Hardness of CVP}, IEEE 58th Annual Symposium on Foundations of Computer Science (FOCS), 13--24, 2017.
	
	\bibitem{BenPeiTan} H. Bennett, C. Peikert, and Y. Tang, \emph{Improved Hardness of BDD and SVP Under Gap-(S)ETH}, In: Proceedings of the 13th Innovations in Theoretical Computer Science Conference (ITCS), Vol. 215, Article No. 19, 2022.
	
	\bibitem{BhaBonEgr}
	A. Bhattacharyya, \'{E}. Bonnet, L. Egri, S. Ghoshal, K. C. S., B. Lin, P. Manurangsi, and D. Marx, \emph{Parameterized Intractability of Even Set and Shortest Vector Problem}, Journal of the ACM, 68(3): Article No. 16, 2021.	
	
	\bibitem{CosJouMac} M. J. Coster, A. Joux, B. A. LaMacchia, A. M. Odlyzko, C. Schnorr, and J. Stern, \emph{Improved low-density subset sum algorithms}, Computational Complexity, 2: 111--128, 1992.
	
	\bibitem{Din} I. Dinur, \emph{Approximating SVP$_{\infty}$ to within almost-polynomial factors is NP-hard}, Theoretical Computer Science, 289(1): 55--71, 2002.
	
	\bibitem{FluGro} J. Flum and M. Grohe, \emph{Parameterized Complexity Theory}, Texts in Theoretical Computer Science, Springer, 2006.
	
	\bibitem{HaiSah} I. M. Hair and A. Sahai, \emph{SVP$_p$ is NP-Hard for all $p>2$, Even to Approximate Within a Factor of $2^{\log^{1-\varepsilon}(n)}$}, arXiv:2511.04125, Preprint, 2025.
	
	\bibitem{HecSaf} Y. Hecht and M. Safra, \emph{Deterministic Hardness-of-Approximation of Unique-SVP and GapSVP in  $\ell_p$-norms for $p>2$}, arXiv:2510.16991, Preprint, 2025.
	
	\bibitem{JanLanLan} K. Jansen, F. Land, and K. Land, \emph{Bounding the Running Time of Algorithms for Scheduling and Packing Problems}, In: Algorithms and Data Structures (WADS 2013), Lecture Notes in Computer Science, vol. 8037, 439--450, Springer, 2013.
	
	\bibitem{Kar} R. M. Karp, \emph{Reducibility among Combinatorial Problems}, In: Miller, R.E., Thatcher, J.W., Bohlinger, J.D. (eds) Complexity of Computer Computations. The IBM Research Symposia Series, 85--103, 1972.
	
	\bibitem{Kho} S. Khot, \emph{Hardness of approximating the shortest vector problem in lattices}, J. ACM, 52(5): 789--808, 2005.
	
	\bibitem{KreNip} K. Kreuzer and T. Nipkow, \emph{Verification of NP-Hardness Reduction Functions for Exact Lattice Problems}, In: Automated Deduction -- CADE 29, Lecture Notes in Artificial Intelligence, vol. 14132, 365--381, Springer, 2023.
	
	\bibitem{LagOdl} J. C. Lagarias and A. M. Odlyzko, \emph{Solving low-density subset sum problems}, Journal of the ACM, 32(1): 229--246, 1985.
		
	\bibitem{LenLenLov} A. K. Lenstra, H. W. Lenstra, and L. Lovász, \emph{Factoring Polynomials with Rational Coefficients}, Ann. of Math., 261(4): 515--534, 1982. 
	
	\bibitem{Lu} Y. Lu, R. Zhang, L. Peng, and D. Lin, \emph{Solving Linear Equations Modulo Unknown Divisors: Revisited.}, In: Iwata, T., Cheon, J. (eds) Advances in Cryptology -- ASIACRYPT 2015, Lecture Notes in Computer Science, vol. 9452, Springer, 2015.
	
	\bibitem{Mic} D. Micciancio, \emph{The shortest vector in a lattice is hard to approximate to within some constant}, SIAM J. Comput., 30(6): 2008--2035, 2001.
	
	\bibitem{MicGol} D. Micciancio and S. Goldwasser, \emph{Complexity of Lattice Problems: A Cryptographic Perspective}, The Springer International Series in Engineering and Computer Science, vol. 671, Springer, 2002.
	
	\bibitem{Pei} C. Peikert, \emph{Public-key cryptosystems from the worst-case shortest vector problem: extended abstract}, In: Proceedings of the forty-first annual ACM Symposium on Theory of Computing (STOC), 333--342, 2009.
	
	\bibitem{SchEuc} C. Schnorr, M. Euchner, \emph{Lattice Basis Reduction: Improved Practical Algorithms and Solving Subset Sum Problems}, Mathematical Programming, 66: 181--199, 1994.
	
	\bibitem{Van} P. van Emde Boas, \emph{Another NP-complete partition problem and the complexity of computing short vectors in a lattice}, Technical Report, 1981.
	
\end{thebibliography}
\end{document}